\documentclass[reqno]{amsart}
\usepackage{amsmath}
\usepackage{hyperref}
\usepackage{amsthm}
\usepackage{amssymb}
\usepackage{graphics}
\usepackage{latexsym}
\usepackage{comment}
\usepackage{extarrows}
\usepackage{wrapfig}
\usepackage{comment}
\usepackage {colortbl,array,xcolor}
\usepackage{hhline}
\usepackage{tikz}
\usetikzlibrary{intersections, calc, math, patterns}

\numberwithin{equation}{section}
\newtheorem{thm}{Theorem}[section]
\newtheorem{prop}[thm]{Proposition}
\newtheorem{lem}[thm]{Lemma}

\theoremstyle{remark}

\newcommand{\R}{{\mathbb R}}

\newcommand{\N}{{\mathbb N}}

\newcommand{\Sp}{{\mathbb S}}
\newcommand{\LR}[1]{{\langle {#1} \rangle }}

\newcommand{\F}{\mathcal{F}}

\newcommand{\supp}{\operatorname{supp}}

\newcommand{\vece}{{\textnormal{\textbf{e}}}}


\title[LWP of a system describing laser-plasma intercations]{Local well-posedness of a system describing laser-plasma interactions}

\author[S.~Herr]{Sebastian Herr}
\address[S.~Herr]{Universit\"{a}t Bielefeld, Fakult\"{a}t f\"{u}r Mathematik, Postfach 10 01 31, 33501 Bielefeld, Germany}
\email{herr@math.uni-bielefeld.de}

\author[I.~Kato]{Isao Kato}
\address[I.~Kato]{Department of Mathematics, Graduate School of Science, Kyoto University,
Kyoto 606-8502, Japan}
\email{kato.isao.23n@st.kyoto-u.ac.jp}
\author[S.~Kinoshita]{Shinya Kinoshita}
\address[S.~Kinoshita]{Department of Mathematics, Graduate School of Science and Engineering, Saitama University, Saitama 338-8570, Japan}
\email{kinoshita@mail.saitama-u.ac.jp}
\author[M.~Spitz]{Martin Spitz}
\address[M.~Spitz]{Universit\"{a}t Bielefeld, Fakult\"{a}t f\"{u}r Mathematik, Postfach 10 01 31, 33501 Bielefeld, Germany}
\email{mspitz@math.uni-bielefeld.de}

\subjclass[2010]{35Q55; 35L70, 35B30}
\keywords{local well-posedness, degenerate Zakharov system}

\begin{document}

\begin{abstract}
A degenerate Zakharov system arises as a model for the description of laser-plasma interactions. It is a coupled system of a Schr\"{o}dinger and a wave equation with a non-dispersive direction. In this paper, a new local well-posedness result for rough initial data is established. The proof is based on an efficient use of local smoothing and maximal function norms.
\end{abstract}

\maketitle


\section{Introduction}\label{sec:intro}

In view of numerous applications, there is strong interest in plasma dynamics and laser-plasma interactions. Ideally, one wants to use numerical simulations to gain insight in these processes. This requires reliable models and a thorough understanding thereof.

In 1972, Zakharov introduced in \cite{Z72} the system
\begin{equation}
\label{eq:Zakharov}
	\begin{split}
	i\partial_t E + \Delta E ={}& E n \quad \text{ in }(-T,T) \times \R^d, \\
	\partial^2_t n - \Delta n ={}& \Delta |E|^2 \quad \text{ in }(-T,T) \times \R^d,
	\end{split}
\end{equation}
to study Langmuir waves in a non- or weakly magnetized plasma, where the physical dimension is $d=3$. Here, $E$ denotes the complex envelope of the electric field and $n$ the ion density fluctuation.

A different situation arises when modelling the interaction of a plasma with a laser beam.
Using the paraxial approximation (see e.g.~\cite[Section~4]{S05}) to describe this interaction, one obtains
the system
\begin{equation}\label{eq:dZakharov}
  \begin{split}
  i(\partial_t E + \partial_{x_d} E) + \Delta' E={}&nE
  \quad \text{ in }(-T,T) \times \R^d,\\
\partial_t^2 n - \Delta' n ={}& \Delta' |E|^2  \quad \text{ in }(-T,T) \times \R^d,
\end{split}
\end{equation}
where $E$ now denotes the complex amplitude of the laser beam and $n$ the real-valued electron density fluctuation. Both are functions of the variables $(t,x_1,\ldots,x_d) \in(-T,T) \times \R^d$. Since the last spatial variable $x_d$ plays a distinguished role (the direction of propagation of the laser beam), we use the notation $x=(x_1,\ldots,x_{d-1}) \in \R^{d-1}$ and $\Delta'=\sum_{i=1}^{d-1}\partial_{x_i}^2$.
 We refer to~\cite{RD93} and~\cite{SS99} for a derivation in $d=3$. In~\cite{RD93} a reduced version of~\eqref{eq:dZakharov} was used to analyze self-focusing from local intensity peaks (hot spots) in laser plasmas, which is a possible instability for inertial confinement fusion.

A more precise description of laser-plasma interaction takes into account that part of the incident light field is backscattered by Raman- and Brillouin-type processes. The three resulting light fields interact with the electric field of the plasma as well as with the density fluctuation. The resulting system can be seen as a nonlinear coupling of equations of the form~\eqref{eq:Zakharov} and~\eqref{eq:dZakharov}. A reduced model system of this type was used in~\cite{RDR99} for numerical simulations, see also~\cite{CC04}. The first step in the analysis of these advanced models is the understanding of systems~\eqref{eq:Zakharov} and~\eqref{eq:dZakharov}.
Finally, we note that the system~\eqref{eq:dZakharov} also arises as WKB approximation for the Euler-Maxwell equations in the cold ion case for highly oscillatory initial data, see~\cite{T05}.

In the present paper, we study the initial value problem associated with \eqref{eq:dZakharov}, i.e. we prescribe
\begin{equation}\label{eq:ic}
  (E,n, \partial_t n)|_{t=0} =(E_0,n_0,n_1) \quad \text{ in } \R^d.
  \end{equation}

We prove the following local well-posedness result.
\begin{thm}\label{thm:main}
  Let $d \geq 3$, $s>\frac{d-2}{2}$, $s'>\frac12$. Then, \eqref{eq:dZakharov}--\eqref{eq:ic} is locally well-posed if the initial data satisfies
$$ (E_0,n_0,|\nabla'|^{-1}n_1)\in H^{s,s'}(\R^d) \times \big(H^{s-\frac12, s'} (\R^d)\big)^2.$$
\end{thm}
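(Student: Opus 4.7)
The plan is to diagonalize the wave equation and then run a contraction mapping. Introducing $N_\pm = n \mp i |\nabla'|^{-1} \partial_t n$, the wave equation becomes two half-wave equations $(i\partial_t \pm |\nabla'|) N_\pm = \mp |\nabla'| |E|^2$, with $n = \tfrac12(N_+ + N_-)$, and the data hypothesis translates to $N_\pm|_{t=0} \in H^{s-\frac12, s'}$. I would then recast the full system in Duhamel form, with linear propagators $S(t) = e^{it(\Delta' - i\partial_{x_d})}$ for $E$ and $e^{\pm it|\nabla'|}$ for $N_\pm$. The crucial structural observation is that $S(t)$ factorizes as a $(d-1)$-dimensional Schr\"odinger evolution in $x'$ composed with a unit-speed translation in $x_d$, while the half-wave propagators act trivially in $x_d$; this is what enables a genuinely anisotropic treatment.

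The resolution space combines three kinds of a priori estimates for these propagators. First, Kenig--Ponce--Vega type local smoothing for $S(t)$, applied slice-by-slice in $x_d$, providing a $\frac12$-derivative gain in the transverse variables in an $L^\infty_{x'} L^2_{t, x_d}$-type norm. Second, maximal-function-type estimates, which are the main source of the regularity thresholds: since $x_d$ sees no dispersion, one must afford the Sobolev embedding $H^{s'}_{x_d} \hookrightarrow L^\infty_{x_d}$, forcing $s' > \frac12$, while the Kenig--Ponce--Vega maximal bound for the $(d{-}1)$-dimensional Schr\"odinger group forces $s > \frac{d-2}{2}$. Third, transport/Strichartz estimates for the half-wave propagators in the transverse variables, with $x_d$-regularity carried along passively.

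The nonlinear estimates then reduce to a trilinear bound for the pairing $\int nE\bar F\,dx\,dt$ arising from $nE$ in the Schr\"odinger equation and a bilinear bound for $|\nabla'|(E\bar E)$ in $H^{s-\frac12, s'}$ coming from the wave equation. Both are handled by a dyadic Littlewood--Paley decomposition in the transverse frequency $|\xi'|$: the factor of highest frequency is treated by local smoothing (or its dual), while the remaining factor is controlled by the maximal function norm; the $x_d$-regularity is distributed through fractional product estimates in the non-dispersive variable. The main obstacle I expect is the trilinear estimate in the case where the $n$-factor carries the highest transverse frequency, because one has to extract a full $|\nabla'|^{1/2}$ gain from local smoothing on each of the two Schr\"odinger factors and carefully balance it against the $\frac12$-derivative deficit in the $N_\pm$-data. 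Crucially, the first-order structure of the half-wave equation and the commutation of all three propagators with $\partial_{x_d}$ ensure that no additional derivative is lost in the non-dispersive direction. Once these multilinear estimates close uniformly on a short time interval, a standard Picard iteration yields existence, uniqueness and continuous dependence on the initial data.
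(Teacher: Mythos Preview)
Your proposal is essentially the paper's approach: first-order reduction of the wave equation, Duhamel formulation, and a contraction in anisotropic spaces built from local smoothing, maximal function, and Strichartz norms for the $(d{-}1)$-dimensional Schr\"odinger flow, with the $x_d$-variable carried passively and $s'>\tfrac12$, $s>\tfrac{d-2}{2}$ arising exactly as you describe. Two points where the paper's implementation differs from your sketch: the half-wave factor is placed only in the energy space $L^\infty_t H^{s-\frac12,s'}$ (no half-wave Strichartz or smoothing is used at all), and the resolution space is specifically the Bejenaru--Ionescu--Kenig--Tataru directional framework $F_N/G_N$, which sums $L^{\infty,2}_{\vece_j}$ and $L^{2,\infty}_{\vece_j}$ norms over the coordinate axes $\vece_1,\dots,\vece_{d-1}$ and whose inhomogeneous estimate (Lemma~\ref{lemma-2.2}) is the main linear input. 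In particular, in the case you flag as the obstacle---$n$ carrying the highest transverse frequency---the required $\tfrac12$-derivative gain comes from placing the \emph{output} in the dual local smoothing part $N^{-1/2}L^{1,2}_{\vece_j}$ of $G_N$ and the low-frequency Schr\"odinger input in the maximal norm $L^{2,\infty}_{\vece_j}$, rather than from smoothing on two Schr\"odinger factors.
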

We define the non-isotropic Sobolev spaces $H^{s,s'}(\R^d)$ as the collection of all $f \in \mathcal{S}'(\R^d)$ satisfying
\[
\| f \|_{H^{s, s'}(\R^d) }  := \Bigl( \int_{\R^d} \LR{\xi}^{2 s} \LR{\xi_d}^{2 s'} | \F_{x,x_d} f (\xi, \xi_d) |^2 d\xi d \xi_d
 \Bigr)^{1/2}<+\infty ,
\]
where $\xi = (\xi_1,\ldots,\xi_{d-1}) \in \R^{d-1}$, $\xi_d \in \R$, and $|\nabla'|^{-1}=(\sqrt{-\Delta'})^{-1}$ is the Fourier multiplier. We refer to Theorem \ref{thm:lwp} for a more precise version of our main result.

Without going into detail, we remark that our proof in Section \ref{sec:NonlinearEstimates} also implies certain refinements in Besov spaces at the threshold regularities if $d\geq 4$, and in addition, one could avoid low frequency conditions (see Section \ref{sec:lwp}).


\subsection*{Previous results}
Coupling two of the fundamental dispersive equations, the Zakharov system~\eqref{eq:Zakharov} and the corresponding initial value problem have attracted a lot of attention. We refer to~\cite{GGKZ16, SS99} and the references therein for the history of the problem and to~\cite{OT92, BC96, GTV97} for a few milestones in the theory. The local well-posedness theory for the Zakharov system is now comprehensively understood, see~\cite{CHN19} for the state of the art in dimensions $d \geq 4$ and~\cite{S21} for $d \leq 3$.

Due to the lack of dispersion in the longitudinal direction in~\eqref{eq:dZakharov} the system \eqref{eq:dZakharov} is sometimes called the \emph{degenerate Zakharov system}. This partial lack of dispersion adds significant difficulties to the well-posedness theory, which therefore is still in its infancy.  In~\cite{CC04} the question of local well-posedness of~\eqref{eq:dZakharov} has been posed. The periodic problem for~\eqref{eq:dZakharov} is ill-posed, see~\cite{CM06}. A positive answer in dimension three was given in~\cite{LPS05} for initial values $(E_0, n_0,n_1)$ in $H^5(\R^3)\times H^5(\R^3)\times H^4(\R^3)$ with $\partial_{x_1}^{\frac{1}{2}}E_0, \partial_{x_2}^{\frac{1}{2}}E_0 \in H^5(\R^3)$ and $\partial_{x_3} n_1 \in H^4(\R^3)$, using local smoothing and maximal function estimates. Improving upon the maximal function estimate, local well-posedness for initial values $(E_0, n_0, n_1) \in H^2(\R^3) \times H^2(\R^3) \times H^1(\R^3)$ with $\partial_{x_1}^{\frac{1}{2}}E_0, \partial_{x_2}^{\frac{1}{2}}E_0 \in \dot{H}^2(\R^3)$ and $\partial_{x_3} n_1 \in H^1(\R^3)$ was shown in~\cite{BL15}.

In view of these results, the assumptions on the initial data in Theorem \ref{thm:main} are lowered significantly. Our approach is based on an efficient use of local smoothing and maximal function norms. More precisely, we adapt the approach devised in \cite{BIKT11} (to solve the Schr\"{o}dinger maps problem) to the setting of the degenerate Zakharov system.

\subsection*{Organisation of the paper}
In Section \ref{sec:pre} we introduce notation and provide linear estimates. In Section \ref{sec:lwp} we prove the main result under the hypothesis that two nonlinear estimates hold, which we then prove in Section
\ref{sec:NonlinearEstimates}. In an appendix, we complement our results by showing that it is impossible to prove the nonlinear estimates in Fourier restriction norms only.

\section{Preliminaries}\label{sec:pre}
\subsection*{Notation}
Throughout the paper, we use the following notations.
$A{\ \lesssim \ } B$ means that there exists $C>0$ such that $A \le CB.$
Also, $A\sim B$ means $A{\ \lesssim \ } B$ and $B{\ \lesssim \ } A.$
Let $u=u(t,x,x_d)$ and let $\ \F_t u,\ \F_x, \ \F_{x,x_d} u$ denote the Fourier transform of $u$ in time, $\R^{d-1}$, and $\R^d$, respectively. By
$\F_{t, x,x_d} u = \widehat{u}$ we denote the Fourier transform of $u$ in time and space.
Let $N$, $M$ be dyadic numbers, i.e. there exist $n_1, m_1 \in \N_{0}$ such that
$N= 2^{n_1}$ and $M=2^{m_1}$. Let $\eta \in C^{\infty}_{0}((-2,2))$ be an even, non-negative function which satisfies $\eta (t)=1$ for $|t|\leq 1$. Letting
$\eta_N (\xi):=\eta (|\xi| N^{-1})-\eta (|\xi|2 N^{-1})$,
$\eta_1(\xi):=\eta (|\xi|)$,
the equality ${\sum_{N}\eta_{N}=1}$ holds.
Here we used ${\sum_{N}= \sum_{N \in 2^{\N_0}}}$ for simplicity.
We also use the abbreviations ${\sum_{M}= \sum_{M \in 2^{\N_0}}}$,
 ${\sum_{N, M}= \sum_{N, M \in 2^{\N_0}}}$, etc. throughout the paper.

Let $\textbf{e} \in \Sp^{d-2}$ and $\mathcal{P}_{\vece}=\{\xi \in \R^{d-1} \, |\, \xi \cdot \textbf{e}=0 \}$ with the induced Euclidean measure.
For $p$, $q \in [1,\infty]$, define
\[
\| f \|_{L_{\textbf{e}}^{p,q}} = \Bigl( \int_{\R} \Bigl( \int_{\R \times \mathcal{P}_{\textbf{e}}} |f(t, r \textbf{e} + v)|^q dt dv \Bigr)^{p/q} dr \Bigr)^{1/p}.
\]
We define $I_N^{(d-1)} = \{ \xi \in \R^{d-1} \, | \, \xi \in \supp \eta_N\}$. Let $T >0$ and
\[
L_N^2(T)=\{f \in L^2([-T,T]\times \R^{d-1}) \, | \, \supp \F_{x}f \subset [-T,T] \times I_{N}^{(d-1)}\}.
\]
Let $\phi \in C_0^\infty(\R)$ be non-negative and symmetric, such that $\phi(r)=0$ if $|r|\leq (4\sqrt{d-1})^{-1}$ or $|r|>4$ and $\phi(r)=1$ if $ (2\sqrt{d-1})^{-1}\leq r\leq 2$, and we set $\phi_N(r)=\phi(r/N)$. Then,
\begin{equation} \label{eq:DecompositionIdentity}
\prod_{j=1}^{d-1}(1-\phi_N(\xi_j))=0 \text{ for all } \xi=(\xi_1,\ldots,\xi_{d-1}) \in I_N^{(d-1)} \text{ and } N \in 2^{\N}.
\end{equation}
 We define $P_N = \F_{x}^{-1} \eta_N \F_{x}$ and $P_{N,\vece} = \F_{x}^{-1} \phi_N(\xi \cdot \vece) \F_{x}$. Since both $P_N$ and $P_N P_{N,\vece}$ have kernels in $L^1 (\R^{d-1})$, they are bounded operators on each of the spaces $L_{\textbf{e}'}^{p,q}$.

Let $d \geq 3$, $T>0$, and $p_{d} = (2 d+4)/d$.  For $N>1$ and $f \in L_N^2(T)$, we define the norms
\begin{align*}
\|f\|_{F_N(T)} = \|f\|_{L_t^{\infty}L_x^2} + \|f \|_{L_{t,x}^{p_{d-1}}} + N^{-\frac{d-2}{2}} \sum_{j=1}^{d-1} \|f\|_{L_{\vece_j}^{2,\infty}} + N^{\frac12} \sum_{j=1}^{d-1} \|P_{N,\vece_j}f\|_{L_{\vece_j}^{\infty,2}}
\end{align*}
if $d \geq 4$ and
\begin{align*}
\|f\|_{F_N(T)} &= \|f\|_{L_t^{\infty}L_x^2} + \|f \|_{L_{t,x}^{4}} + (\log N)^{-1}N^{-\frac12} \sum_{j=1}^{2} \|f\|_{L_{\vece_j}^{2,\infty}} \\
&\qquad + N^{\frac12} \sum_{j=1}^{2} \|P_{N,\vece_j}f\|_{L_{\vece_j}^{\infty,2}}
\end{align*}
in the case $d = 3$. To estimate the nonlinear terms, we introduce
\begin{align*}
 \|g\|_{G_N(T)} = \inf_{g=g_1+g_2} \Big(\|g_1 \|_{L_{t,x}^{p_{d-1}'}} + N^{-\frac12} \sum_{j = 1}^{d-1} \|g_2\|_{L_{\vece_j}^{1,2}}\Big).
\end{align*}
Here $p_{d-1}'$ satisfies $1/p_{d-1} + 1/p_{d-1}' = 1$ and $\vece_1, \ldots, \vece_{d-1}$ denote the standard basis of $\R^{d-1}$.
For $N=1$ we modify the above definition as follows:
\[\|f\|_{F_1(T)} = \|f\|_{L_t^{\infty}L_x^2} + \|f \|_{L_{t,x}^{p_{d-1}}}+\sum_{j=1}^{d-1} \|f\|_{L_{\vece_j}^{2,\infty}}, \qquad \|g\|_{G_1(T)} = \|g \|_{L_{t,x}^{p_{d-1}'}}. \]
For $T>0$ and $s$, $s' \geq 0$, we define the normed spaces $F^{s,s'}(T)$ and $W^{s,s'}(T)$ as
\begin{align*}
 F^{s,s'}(T) &= \{ f \in L^2([-T,T] \times \R^{d}) \,| \, \\
&\qquad \|f\|_{F^{s,s'}(T)} = \bigl( \sum_{N,M} M^{2 s'}N^{2s} \bigl\| \eta_{M}(\xi_d)  \| P_N  \F_{x_d}f\|_{F_N(T)}\bigr\|_{L_{\xi_d}^2}^2\bigr)^{\frac12} < \infty\},\\
 W^{s,s'}(T) &= \{ f \in L^2([-T,T] \times \R^{d}) \,| \\
 &\qquad \|f\|_{W^{s,s'}(T)} =  \bigl( \sum_{N,M} M^{2 s'}N^{2s}  \bigl\| \eta_{M}(\xi_d)  \| P_N  \F_{x_d}f\|_{L_t^{\infty}L_x^2}\bigr\|_{L_{\xi_d}^2}^2\bigr)^{\frac12}   < \infty\}.
\end{align*}
For $g \in L_{N}^2(T)$, we define the norms for the nonlinear terms as
\begin{align*}
& \|g\|_{G^{s,s'}(T)} = \bigl( \sum_{N,M} M^{2 s'}N^{2s} \bigl\| \eta_{M}(\xi_d)  \| P_N  \F_{x_d}g\|_{G_N(T)}\bigr\|_{L_{\xi_d}^2}^2\bigr)^{\frac12},\\
& \|g\|_{Y^{s,s'}(T)} = \bigl( \sum_{N,M} M^{2 s'}N^{2s} \bigl\| \eta_{M}(\xi_d)  \| P_N  \F_{x_d}g\|_{L_t^1 L_x^{2}}\bigr\|_{L_{\xi_d}^2}^2\bigr)^{\frac12}.
\end{align*}

\subsection*{Linear estimates}
In this subsection we collect the estimates for the flow of the linear Schr{\"o}dinger equation which we employ in the following. Besides the classical Strichartz estimates, we crucially rely on local smoothing and maximal function estimates. The local smoothing estimates follow from (4.18) in~\cite{IK07}. The maximal function estimates for $d \geq 4$ are stated in~(4.6) in~\cite{IK07}. We refer to~\cite{IK06} (see (3.28) in the proof of Lemma~3.3) for the maximal function estimates for $d=3$. See also Lemma~3.2 in~\cite{BIKT11}.

\begin{lem}\label{lemma-2.1}
For all $f \in L^2(\R^{d-1})$, $N
\geq 1$ and $\vece \in \Sp^{d-2}$, we have:\\
\textnormal{(a) (Local smoothing estimate).}
\[
\|e^{it \Delta'}P_{N,\vece}f \|_{L_{\vece}^{\infty,2}} \lesssim N^{-\frac12}\|f \|_{L^2},
\quad (N>1).
\]
\textnormal{(b) (Maximal function estimate).}
\[
\|e^{it \Delta'}P_{N}f \|_{L_{\vece}^{2,\infty}} \lesssim N^{\frac{d-2}{2}}\|f \|_{L^2}, \quad d\geq4,
\]
and
\[
\|\eta(t)e^{it \Delta'}P_{N}f \|_{L_{\vece}^{2,\infty}} \lesssim (1+\log N) N^{\frac{1}{2}}\|f \|_{L^2}, \quad d=3.
\]
\textnormal{(c) (Strichartz estimate).}
\[
\|e^{it \Delta'}f \|_{L_{t,x}^{p_{d-1}}} \lesssim \|f \|_{L^2}.
\]
\end{lem}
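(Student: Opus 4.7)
The three bounds are classical estimates for the free Schr\"odinger flow $e^{it\Delta'}$ on $\R^{d-1}$. My plan is to exploit the rotation invariance of $\Delta'$ and of $L^2(\R^{d-1})$ to reduce (a) and (b) to the case $\vece=\vece_1$; writing then $x=(x_1,v)\in\R\times\R^{d-2}$, the norms become $\|\cdot\|_{L_{x_1}^{\infty}L_{t,v}^2}$ for (a) and $\|\cdot\|_{L_{x_1}^{2}L_{t,v}^{\infty}}$ for (b). The Strichartz estimate (c) is rotation invariant and requires no such reduction.

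For (a), I would take the Fourier transform in $(t,v)$ and apply Plancherel. This produces a measure on the paraboloid $\tau+\xi_1^2+|\xi_v|^2=0$, and solving the delta for $\xi_1$ yields two roots each carrying a Jacobian $(2|\xi_1|)^{-1}\sim N^{-1}$ on the support of $\phi_N(\xi\cdot\vece_1)$. Taking the pointwise supremum in $x_1$, squaring, and changing variables back to $d\xi_1$ (Jacobian $\sim N$) produces exactly $N^{-1}\|f\|_{L^2}^2$, which is the claimed $N^{-1/2}$ gain; the cutoff $P_{N,\vece}$ is essential to keep this Jacobian non-degenerate. For (c), the exponent $p_{d-1}=2+4/(d-1)$ is precisely the diagonal Schr\"odinger Strichartz pair in dimension $d-1$, which is non-endpoint for $d\geq 3$, so the estimate follows from the classical $TT^*$ argument of Strichartz.

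The main obstacle is (b). The plan is to factor $e^{it\Delta'}=e^{it\partial_{x_1}^2}\circ e^{it\Delta_v}$, use Bernstein in the transverse frequency $\xi_v\in\R^{d-2}$ to replace $L_v^{\infty}$ by $L_v^{2}$ at cost $N^{(d-3)/2}$, and then apply a Kenig-Ponce-Vega one-dimensional maximal function estimate for $e^{it\partial_{x_1}^2}P_N$ to supply the remaining $N^{1/2}$; together these yield the $N^{(d-2)/2}$ factor when $d\geq 4$. When $d=3$ there is no transverse direction left for Bernstein, and what remains is a borderline two-dimensional Carleson-type maximal bound that is only logarithmically bounded; this accounts for the $(1+\log N)$ loss and is exactly why the time cutoff $\eta(t)$ appears in that case. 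This $d=3$ endpoint is where the real work lies, and I would appeal to the sharp analysis of Ionescu-Kenig \cite{IK06, IK07} to obtain the precise bound.
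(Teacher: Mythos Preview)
The paper gives no proof of this lemma at all; the sentences preceding it simply cite (4.18) and (4.6) in \cite{IK07} for (a) and for (b) when $d\geq4$, (3.28) in \cite{IK06} for (b) when $d=3$, and treat (c) as classical. Your sketches for (a) and (c) are correct and standard, and in that sense go beyond what the paper provides.

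Your plan for (b), however, has a real gap. First, the Bernstein step is miscounted: with $v\in\R^{d-2}$ and $P_N$ only giving $|\xi_v|\lesssim N$, passing from $L_v^\infty$ to $L_v^2$ costs $N^{(d-2)/2}$, not $N^{(d-3)/2}$; with the correct exponent, \emph{any} further positive power of $N$ from a one-dimensional maximal step already overshoots the target $N^{(d-2)/2}$. (Your remark that for $d=3$ ``there is no transverse direction left'' is the same off-by-one: $v\in\R^{1}$ there.) Second, and more fundamentally, after taking $L_v^2$ and removing the unitary $e^{it\Delta_v}$ you are reduced to bounding $\|e^{it\partial_{x_1}^2}h\|_{L_{x_1}^2 L_t^\infty}$, and for the one-dimensional Schr\"odinger flow there is \emph{no} global-in-time $L_{x_1}^2 L_t^\infty$ maximal inequality at any regularity; the transverse dispersion you have thrown away is exactly what makes the $d\geq4$ estimate hold globally. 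The proof in \cite{IK07} does not factorize the flow: it is a $TT^*$ argument based on pointwise decay of the frequency-localized kernel $K_N(t,x)=\int_{\R^{d-1}}e^{ix\cdot\xi-it|\xi|^2}\eta_N(\xi)\,d\xi$, for which one shows $\sup_{t,v}|K_N(t,x_1,v)|\lesssim N^{d-1}(1+N|x_1|)^{-(d-1)/2}$; the $L_{x_1}^1$ integral of this converges precisely when $d\geq4$ and yields $N^{d-2}$, while for $d=3$ it diverges logarithmically, which is the origin of the cutoff $\eta(t)$ and the $(1+\log N)$ loss.
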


Note that Lemma \ref{lemma-2.1} implies
\begin{equation} \label{est:LinSchrInF}
	\|e^{it \Delta'-t \partial_{x_d}} \varphi\|_{F^{s,s'}(T)} \lesssim \|\varphi\|_{H^{s,s'}(\R^d)}
\end{equation}
for all $\varphi \in H^{s,s'}(\R^{d})$.

In order to prove the local well-posedness theory via a fixed point argument, we also need estimates for the inhomogeneous terms in our function spaces. In the case $d \geq 4$, these are provided by Proposition~3.8 in~\cite{BIKT11}, as one sees by checking the definitions of the involved norms. We provide a proof here to include the case $d = 3$.

\begin{lem}\label{lemma-2.2}
We have
\[
\Bigl\| \int_0^t e^{i(t-s) \Delta'} (u(s)) ds \Bigr\|_{F_N(T)} \lesssim \|u\|_{G_N(T)}
\]
for all $0<T<1$.
\end{lem}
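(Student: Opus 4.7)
The plan is to prove Lemma~\ref{lemma-2.2} by the standard $TT^{\ast}$ machinery combined with the Christ--Kiselev lemma to pass from full to retarded time integration, essentially reproducing Proposition~3.8 of~\cite{BIKT11} with the $d=3$ maximal function adjustment built in. As a first step I would pick a decomposition $u = u_1 + u_2$ nearly attaining the infimum in the $G_N(T)$ norm, so that
\[
\|u_1\|_{L_{t,x}^{p_{d-1}'}} + N^{-\frac12}\sum_{j=1}^{d-1}\|u_2\|_{L_{\vece_j}^{1,2}} \lesssim \|u\|_{G_N(T)},
\]
and estimate $\Phi u(t) := \int_0^t e^{i(t-s)\Delta'}u(s)\,ds$ in each of the four seminorms defining $F_N(T)$ separately for $u_1$ and $u_2$.

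Writing $U(t)f = e^{it\Delta'}f$ and $U^{\ast}g = \int_{\R} e^{-is\Delta'} g(s)\,ds$, dualising Lemma~\ref{lemma-2.1}(a) and (c) gives
\[
\|U^{\ast} u_1\|_{L^2_x} \lesssim \|u_1\|_{L_{t,x}^{p_{d-1}'}}, \qquad \|U^{\ast} P_{N,\vece_j}u_2\|_{L^2_x} \lesssim N^{-\frac12}\|u_2\|_{L_{\vece_j}^{1,2}}.
\]
The second inequality only applies to inputs carrying the projection $P_{N,\vece_j}$, but since $u_2 = P_N u_2$, the identity~\eqref{eq:DecompositionIdentity} lets me expand
\[
u_2 = \sum_{\emptyset \neq S \subset\{1,\ldots,d-1\}} (-1)^{|S|+1}\prod_{j\in S}P_{N,\vece_j}u_2,
\]
and peeling off a single $P_{N,\vece_{j_0}}$ factor in each summand together with $L_{\vece_{j_0}}^{1,2}$-boundedness of the remaining $P_{N,\vece_i}$ upgrades this to $\|U^{\ast} u_2\|_{L^2_x} \lesssim N^{-1/2}\sum_{j}\|u_2\|_{L_{\vece_j}^{1,2}}$. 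Composing these dual bounds with the direct estimates of Lemma~\ref{lemma-2.1} then produces the non-retarded bound of $UU^{\ast}u$ in each of the four seminorms: the $L^{\infty}_tL^2_x$ piece comes from pointwise-in-$t$ application of the dual estimates to $\chi_{[0,t]}u$; the $L_{t,x}^{p_{d-1}}$ (or $L_{t,x}^4$) piece from Lemma~\ref{lemma-2.1}(c); the $L_{\vece_k}^{2,\infty}$ piece from Lemma~\ref{lemma-2.1}(b) (in the $d=3$ case the factor $1+\log N$ produced by the maximal function estimate is absorbed by the $(\log N)^{-1}$ built into $F_N(T)$); and the $L_{\vece_k}^{\infty,2}$ piece from Lemma~\ref{lemma-2.1}(a) after inserting $P_{N,\vece_k}$ in front of $U$. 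The $N$-weights in $F_N(T)$ and $G_N(T)$ are calibrated so that every power of $N$ cancels exactly.

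It remains to pass from $UU^{\ast}u$ to the retarded integral $\Phi u$, which is where the Christ--Kiselev lemma enters: its hypothesis of a strictly larger output time exponent is satisfied in all cases where the input has time exponent $p_{d-1}'$ or $1$ and the output has time exponent $p_{d-1}$, $\infty$, or $2$, with the sole exception of the endpoint configuration input $L_{\vece_j}^{1,2}$/output $L_{\vece_k}^{\infty,2}$ (both with $L^2$ in time), which must be handled either by an endpoint variant of Christ--Kiselev or by a direct Keel--Tao type bilinear argument using the explicit kernel of $UU^{\ast}$. The $N=1$ case is separate and easier since $F_1(T)$ drops the local smoothing component and only the Strichartz estimate is used. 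The main point requiring attention is the systematic use of~\eqref{eq:DecompositionIdentity} to align the $L_{\vece_j}^{1,2}$ data with the $P_{N,\vece_j}$ structure demanded by the dual local smoothing estimate, together with the verification that all $N$-weights cancel across the four components and the careful handling of the endpoint retardation; once these are in place, the rest is bookkeeping.
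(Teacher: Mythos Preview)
Your overall architecture is right and matches the paper: split $u=u_1+u_2$, treat the Strichartz input $u_1$ via Christ--Kiselev (this is exactly \eqref{est:lemma2.2-1} in the paper), and treat the local-smoothing input $u_2$ separately. The use of~\eqref{eq:DecompositionIdentity} to insert a $P_{N,\vece_j}$ on $u_2$ is also what the paper does, via the telescoping identity~\eqref{eq:dec-pe}.

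The gap is in the $u_2$ part. You correctly flag the $L^{1,2}_{\vece_j} \to L^{\infty,2}_{\vece_k}$ configuration as the one where Christ--Kiselev fails, but then propose to close it by ``an endpoint variant of Christ--Kiselev or by a direct Keel--Tao type bilinear argument using the explicit kernel of $UU^{\ast}$.'' Neither of these is what actually works, and the difficulty is more substantial than your phrasing suggests. The outer variable in both $L^{1,2}_{\vece_j}$ and $L^{\infty,2}_{\vece_k}$ is a \emph{spatial} direction, not time; there is no dispersive decay in that variable to drive a Keel--Tao interpolation, and endpoint Christ--Kiselev at matching exponents is false in general. The paper (following Lemma~7.4 of~\cite{BIKT11}) instead writes the retarded Duhamel term via the fundamental solution $K_0$, freezes the $\vece_1$-coordinate $y_1$ of the source, and invokes Lemma~7.5 of~\cite{BIKT11}: this decomposes $P_NP_{N,\vece_1}v_{y_1}$ as $(P_{<2^{-40}N,\vece_1}\mathbf{1}_{\{x_1>y_1\}})\cdot P_NP_{N,\vece_1}e^{it\Delta'}v_0$ plus a remainder $w$ that gains a full derivative. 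The first piece is a free evolution multiplied by a low-frequency cutoff, so all of Lemma~\ref{lemma-2.1} applies to it directly with no retardation left; the remainder is handled by Sobolev embedding. This parametrix-type decomposition is the real content of~\eqref{est:lemma2.2-2}, and your proposal does not identify it.

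A smaller point: you say the input $L^{1,2}_{\vece_j}$ has ``time exponent $1$,'' but the exponent $1$ is in the spatial $\vece_j$-direction and the time exponent is $2$; you correct yourself later (``both with $L^2$ in time''), but the earlier slip indicates exactly why Christ--Kiselev is delicate here---the time variable sits inside, not outside, these mixed norms.
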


\begin{proof}
It is straightforward to prove Lemma \ref{lemma-2.2} in the case $N=1$. Thus, we assume $N>1$. We need to show
\begin{align}
& \Bigl\| \int_0^t e^{i(t-s) \Delta'} (u(s)) ds \Bigr\|_{F_N(T)} \lesssim \|u \|_{L_{t,x}^{p_{d-1}'}},\label{est:lemma2.2-1}\\
& \Bigl\| \int_0^t e^{i(t-s) \Delta'} (u(s)) ds \Bigr\|_{F_N(T)} \lesssim N^{-\frac12} \sum_{j = 1}^{d-1} \|u\|_{L_{\vece_j}^{1,2}}.\label{est:lemma2.2-2}
\end{align}

The former estimate \eqref{est:lemma2.2-1} is a consequence of the Christ-Kiselev lemma. See~\cite{CK01} and~Lemma B.3 in~\cite{WHH09}.
Alternatively, $U^p$ and $V^p$ spaces were employed to show \eqref{est:lemma2.2-1}, see the proof of Lemma~7.3 in~\cite{BIKT11}.
For the latter estimate \eqref{est:lemma2.2-2}, we follow the proof of Lemma~7.4 in~\cite{BIKT11}, see also \cite{P18}.
Because of~\eqref{eq:DecompositionIdentity}, we have
	\begin{equation}\label{eq:dec-pe}
		P_N f = \sum_{j=1}^{d-1} P_{N,\vece_j} \Big[\prod_{l=1}^{j-1} (1-P_{N,\vece_l})\Big]P_N f.
	\end{equation}
Hence, without loss of generality, it suffices to show
\begin{equation}
\Bigl\| P_N P_{N,\vece_1}\int_0^t e^{i(t-s) \Delta'} (u(s)) ds \Bigr\|_{F_N(T)} \lesssim N^{-\frac12} \|u\|_{L_{\vece_1}^{1,2}}.
\end{equation}
We define the fundamental solution of the Schr\"{o}dinger equation in $\R^{d-1}$ as
\[
K_0(t,x)=(4 \pi it)^{-\frac{d-1}{2}} e^{\frac{i|x|^2}{4t}}.
\]
Then, the inhomogeneous term can be expressed as
\begin{align*}
& \int_0^t e^{i(t-s) \Delta'} (u(s)) ds\\
 & = \int_{s<t} \int_{\R^{d-1}}K_0(t-s,x-y)u(s,y) dy ds - \int_{s<0} e^{i(t-s) \Delta'} (u(s)) ds \\
& = \int_{\R}\int_{s<t} \int_{\R^{d-2}} K_0(t-s,x_1-y_1, x'-y')u(s,y_1,y') dy' ds d y_1- e^{it \Delta'}F(x)\\
& = \int_{\R} v_{y_1}(t,x) d y_1- e^{it \Delta'}F(x),
\end{align*}
where
\begin{align*}
& v_{y_1}(t,x)  =\int_{s<t} \int_{\R^{d-2}} K_0(t-s,x_1-y_1, x'-y')u(s,y_1,y') dy' ds,\\
& F(x) = \int_{\R} e^{-is \Delta'} \bigl( \mathbf{1}_{(-\infty, 0)}u(s) \bigr) ds.
\end{align*}
Since the latter term can be handled by Lemma~2.1 and the dual estimate of the local smoothing estimate, it suffices to prove
\begin{equation}
\|P_N P_{N,\vece_1}v_{y_1}\|_{F_N(T)} \lesssim N^{-\frac12} \|u(y_1)\|_{L^2}.
\end{equation}
To see this, we invoke Lemma~7.5 in~\cite{BIKT11} which implies that there exist functions $v_0$ and $w$ such that
\begin{align*}
P_N P_{N,\vece_1}& v_{y_1}(t,x) = (P_{<2^{-40}N, \vece_1} \mathbf{1}_{\{x_1>y_1\}}) \cdot P_N P_{N,\vece_1} e^{it \Delta'}v_0 + w(t,x),\\
& \|v_0\|_{L^2} + N^{-1}(\|\Delta' w\|_{L^2} + \| \partial_t w\|_{L^2}) \lesssim N^{-\frac12} \|u(y_1)\|_{L^2},
\end{align*}
where $P_{<2^{-40}N, \vece_1}$ is defined in the obvious way.
The Sobolev embeddings in time and space yield the necessary bound for $w(t,x)$. To bound the first summand in the previous decomposition, we write as in~\cite{BIKT11} for any $1 \leq j \leq d-1$ 
\[
P_{N,\vece_j}[(P_{<2^{-40}N, \vece_1} \mathbf{1}_{\{x_1>y_1\}}) \cdot P_N  v] = \sum_{N_1 \sim N}
P_{N,\vece_j}[(P_{<2^{-40}N, \vece_1} \mathbf{1}_{\{x_1>y_1\}}) \cdot P_N  P_{N_1,\vece_j}v].
\]
Consequently, the desired bound for the first summand follows from the linear estimates in Lemma~2.1.
\end{proof}
As above, we point out that Lemma \ref{lemma-2.2} yields
\begin{equation} \label{est:InhomSchrInF}
\Bigl\| \int_0^t e^{(t-s)(i \Delta' - \partial_{x_d})} (u(s)) ds \Bigr\|_{F^{s,s'}(T)} \lesssim \|u\|_{G^{s,s'}(T)}
\end{equation}
for all $0<T<1$ and $u \in G^{s,s'}(T)$.

\section{Local well-posedness of the degenerate Zakharov system}\label{sec:lwp}

For the purpose of this paper it is more convenient to work with the first order reformulation of the degenerate Zakharov system~\eqref{eq:dZakharov}. Setting $N = n - i |\nabla'|^{-1} \partial_t n$, system~\eqref{eq:dZakharov} is equivalent to
\begin{equation}\label{eq:dZakharovfo}
  \begin{split}
  i(\partial_t E + \partial_{x_d} E) + \Delta' E={}&\Re(N) E,\quad \text{in } (-T,T) \times \R^d,\\
  i \partial_t N + |\nabla'| N ={}& - |\nabla'| |E|^2,  \quad \text{in } (-T,T) \times \R^d.
\end{split}
\end{equation}
The initial condition \eqref{eq:ic} transforms into
\begin{equation}\label{eq:icfo}
(E,N)|_{t=0} =(E_0,N_0),
\end{equation}
where $N_0 = n_0 - i |\nabla'|^{-1} n_1$. Note that $N_0$ belongs to $H^{s-\frac{1}{2},s'}(\R^d)$ if and only if $(n_0,|\nabla'|^{-1} n_1) \in \big( H^{s-\frac{1}{2},s'}(\R^d) \big)^2$. Moreover, the term $\overline{N}$ can be treated in the same way as $N$ in our analysis so that we drop the real part in~\eqref{eq:dZakharovfo} for simplicity. We remark that one can use a modified transformation involving $(1-\Delta')^{-\frac12}$ to avoid any low frequency conditions by following the argument in \cite{BHHT09}, we omit the details.

Besides the estimates from Section~\ref{sec:pre}, the crucial ingredients in the proof of the local well-posedness theorem are the following estimates for the nonlinear terms appearing on the right-hand side of~\eqref{eq:dZakharovfo}.

\begin{prop} \label{prop:NonlinearEstimates}
Let $d \geq 3$, $s > \frac{d-2}{2}$, and $s'>\frac12$. Then we have
\begin{align}
& \|u v \|_{G^{s,s'}(T)} \lesssim T^{\frac12}\|u\|_{F^{s,s'}(T)} \|v\|_{W^{s-\frac12,s'}(T)},\label{est:prop2.3-a}\\
& \| |\nabla'| (u_1 u_2)\|_{Y^{s-\frac12,s'}(T)} \lesssim T^{\frac12} \|u_1\|_{F^{s,s'}(T)} \|u_2\|_{F^{s,s'}(T)}\label{est:prop2.3-b}
\end{align}
for all $T > 0$.
\end{prop}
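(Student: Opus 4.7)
The strategy is a paraproduct decomposition. I would first apply Littlewood-Paley projections in both the $x$- and $x_d$-directions to all factors, reducing each of \eqref{est:prop2.3-a} and \eqref{est:prop2.3-b} to frequency-localized bilinear bounds on $\|P_{N_0}\eta_{M_0}(\xi_d)(u_{N_1,M_1}v_{N_2,M_2})\|_{G_{N_0}}$ or $\|\,\cdot\,\|_{L^1_tL^2_x}$, together with a gain factor suitable for summation over the dyadic parameters $(N_0,N_1,N_2,M_0,M_1,M_2)$. Since $s'>\tfrac12$, the space $H^{s'}(\mathbb R)$ is a multiplication algebra, so the $x_d$-summation will be handled in a routine way by the standard paraproduct trichotomy in $\xi_d$; the heart of the proof is therefore the analysis of the horizontal frequency interaction $(N_0,N_1,N_2)$.

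I would then split into the standard three cases: high-low ($N_1\gg N_2$), low-high ($N_1\ll N_2$), and high-high ($N_1\sim N_2$). The basic tool throughout is the anisotropic H\"older inequality $\|fg\|_{L^{2,2}_{\vece_j}}\le \|f\|_{L^{\infty,2}_{\vece_j}}\|g\|_{L^{2,\infty}_{\vece_j}}$, which pairs the local smoothing norm of the higher-frequency factor (gain $N^{-1/2}$) with the maximal function norm of the lower-frequency factor (loss $N^{(d-2)/2}$, with a logarithmic loss when $d=3$). The decomposition identity \eqref{eq:DecompositionIdentity}, applied via \eqref{eq:dec-pe}, lets me attach a factor $P_{N,\vece_j}$ to the higher-frequency piece in each case so that Lemma~\ref{lemma-2.1}(a) applies. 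The resulting $L^2_{t,x}$-bound is then upgraded to an $L^1_tL^2_x$-bound (for the $Y$-estimate) or to an $L^{1,2}_{\vece_j}$-bound combined with the $N^{-1/2}$-factor of $G_N$ (for the $G$-estimate) by H\"older in time on $[-T,T]$, which produces the desired $T^{1/2}$-factor. For \eqref{est:prop2.3-a}, when the factor $v\in W^{s-1/2,s'}$ only provides $L^\infty_tL^2_x$-control, I would either combine Bernstein with the local smoothing piece of $F_{N_1}$ or use the $L^{p'_{d-1}}_{t,x}$-piece of $G_N$ together with the Strichartz piece of $F_{N_1}$ and a Bernstein bound on $v$, again with time-H\"older to recover the $T^{1/2}$.

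The main obstacle will be the high-high to low regime $N_1\sim N_2\gtrsim N_0$, particularly in \eqref{est:prop2.3-b} where $|\nabla'|$ introduces an additional factor $N_0$. Here the H\"older mechanism produces a loss $N_{\max}^{-1/2}N_{\min}^{(d-2)/2}$, and the strict condition $s>\tfrac{d-2}{2}$ is exactly what is needed to convert the $N_{\min}^{(d-2)/2}$-loss into a summable gain against the Sobolev weights $N_1^s N_2^s$; a short computation shows that the $N_0$-derivative is absorbed because $s-\tfrac12\ge 0$ and $s>\tfrac{d-2}{2}$ together leave a positive power of $N_0/N_{\max}$. In the case $d=3$, the logarithmic loss in the $L^{2,\infty}_{\vece_j}$-estimate is absorbed precisely by the $(\log N)^{-1}$ weight built into the definition of $F_N(T)$. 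The final dyadic summation in $(N_1,N_2,M_1,M_2)$ is then carried out by Cauchy--Schwarz, using the strict inequalities $s>\tfrac{d-2}{2}$ and $s'>\tfrac12$ to produce small positive exponents off the diagonal and the algebra property in $\xi_d$ to collapse the $M$-sums.
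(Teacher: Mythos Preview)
Your strategy is correct and mirrors the paper's proof: reduce to frequency-localized bilinear bounds and treat the three horizontal interaction regimes using the Strichartz, maximal function, and local smoothing components of $F_N$ and $G_N$; the paper handles $\xi_d$ by a single Minkowski step rather than a full paraproduct, but this is cosmetic.

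There is one point where your stated mechanisms do not close, namely the low--high regime $N_1\lesssim N\sim N_2$ in \eqref{est:prop2.3-a}. Here the high-frequency factor is $v$, which lies only in $L^\infty_tL^2_x$ and carries no local smoothing norm, so your ``basic tool'' $\|fg\|_{L^{2,2}_{\vece_j}}\le\|f\|_{L^{\infty,2}_{\vece_j}}\|g\|_{L^{2,\infty}_{\vece_j}}$ is unavailable; and your two fallback options (local smoothing on $u$ plus Bernstein, or the $L^{p_{d-1}'}$ route with Bernstein on $v$) both lose a positive power of $N_2/N_1$ in this regime. The paper instead uses the H\"older pairing
\[
N^{-\frac12}\|u v\|_{L^{1,2}_{\vece_j}}\le N^{-\frac12}\|u\|_{L^{2,\infty}_{\vece_j}}\|v\|_{L^2_{t,x}}\lesssim T^{\frac12}N_2^{-\frac12}N_1^{\frac{d-2}{2}}\|u\|_{F_{N_1}}\|v\|_{L^\infty_tL^2_x},
\]
so the $N^{-1/2}$ gain comes entirely from the weight built into the $G_N$-norm (with $N\sim N_2$), and the maximal function bound is placed on the \emph{low}-frequency factor $u$. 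Once you insert this pairing for the low--high case, the rest of your argument goes through exactly as you describe.
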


We postpone the proof of Proposition~\ref{prop:NonlinearEstimates} to Section~\ref{sec:NonlinearEstimates} and first show how it implies the local well-posedness of the degenerate Zakharov system.

\begin{thm}
	\label{thm:lwp}
	Let $d \geq 3$, $s > \frac{d-2}{2}$, and $s' > \frac{1}{2}$.
	Then, for every initial data $(E_0, N_0) \in H^{s,s'}(\R^d) \times H^{s-\frac{1}{2},s'}(\R^d)$ there is a time $T > 0$ and a unique solution
	\begin{align*}
		(E,N) \in C([-T,T],H^{s,s'}(\R^d) \times H^{s-\frac{1}{2},s'}(\R^d)) \cap (F^{s,s'}(T) \times W^{s-\frac{1}{2},s'}(T))
	\end{align*}
	of the degenerate Zakharov system~\eqref{eq:dZakharovfo}--\eqref{eq:icfo}.
\end{thm}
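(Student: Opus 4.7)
The plan is to run a standard contraction mapping argument for the Duhamel formulation of~\eqref{eq:dZakharovfo}--\eqref{eq:icfo}, namely
\begin{align*}
	E(t) &= e^{t(i \Delta' - \partial_{x_d})} E_0 - i \int_0^t e^{(t-s)(i\Delta' - \partial_{x_d})} (N E)(s) \, ds, \\
	N(t) &= e^{it |\nabla'|} N_0 + i \int_0^t e^{i(t-s)|\nabla'|} |\nabla'| |E|^2(s)\, ds,
\end{align*}
on the closed ball
\[
B_R(T) := \bigl\{(E,N) \in F^{s,s'}(T) \times W^{s-\frac12,s'}(T) : \|E\|_{F^{s,s'}(T)} + \|N\|_{W^{s-\frac12,s'}(T)} \leq R \bigr\},
\]
with radius $R$ proportional to the size of the data and with $T$ to be chosen small depending on $R$.

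For the Schr\"odinger component I would combine the homogeneous bound \eqref{est:LinSchrInF}, the inhomogeneous bound \eqref{est:InhomSchrInF}, and the bilinear estimate \eqref{est:prop2.3-a} to obtain
\[
\|E\|_{F^{s,s'}(T)} \lesssim \|E_0\|_{H^{s,s'}} + T^{\frac12} \|E\|_{F^{s,s'}(T)} \|N\|_{W^{s-\frac12,s'}(T)}.
\]
For the wave component I would use that $e^{it|\nabla'|}$ is unitary on $L^2_x$ together with Minkowski's inequality in the Duhamel integral (which maps $L^1_t L^2_x$ into $L^\infty_t L^2_x$, hence $Y^{s-\frac12,s'}(T)$ into $W^{s-\frac12,s'}(T)$), and then apply \eqref{est:prop2.3-b} to get
\[
\|N\|_{W^{s-\frac12,s'}(T)} \lesssim \|N_0\|_{H^{s-\frac12,s'}} + T^{\frac12} \|E\|_{F^{s,s'}(T)}^2.
\]
Choosing $R \sim \|E_0\|_{H^{s,s'}} + \|N_0\|_{H^{s-\frac12,s'}}$ and then $T$ small enough that the $T^{1/2}$ factors absorb the nonlinear contributions yields self-mapping of $B_R(T)$.

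The contraction property follows by applying \eqref{est:prop2.3-a}--\eqref{est:prop2.3-b} to the differences $E_1 - E_2$ and $N_1 - N_2$; the trilinear structure of $NE$ and $|E|^2$ produces an extra factor $C T^{1/2} R$, which is strictly less than $1$ after a further shrinking of $T$. Banach's fixed point theorem then provides a unique solution in $B_R(T)$, while uniqueness in the full space $F^{s,s'}(T) \times W^{s-\frac12,s'}(T)$ follows from the same difference estimates applied to an arbitrary pair of solutions. Finally, $C([-T,T], H^{s,s'} \times H^{s-\frac12,s'})$-regularity is obtained from the strong continuity of the two linear propagators on the corresponding Sobolev spaces combined with the fact that the Duhamel integrals are controlled in $C_t H^{s,s'}$ and $C_t H^{s-\frac12,s'}$ by the $L^\infty_t L^2_x$-pieces built into the norms $F^{s,s'}(T)$ and $W^{s-\frac12,s'}(T)$.

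The main obstacle is entirely concentrated in Proposition~\ref{prop:NonlinearEstimates} (proved in Section~\ref{sec:NonlinearEstimates}); once these bilinear bounds are in hand, the scheme above is essentially automatic. The only structural point worth emphasising is the asymmetry between the two components: the wave part of the system does not enjoy any Schr\"odinger-type smoothing, so it is measured in the $L^\infty_t L^2_x$-based norm $W^{s-\frac12,s'}$ rather than in $F^{s-\frac12,s'}$, and this choice pairs precisely with the $L^1_t L^2_x$-type source-space estimate \eqref{est:prop2.3-b} provided by $Y^{s-\frac12,s'}$.
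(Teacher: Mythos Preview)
Your proposal is correct and follows essentially the same route as the paper: the paper also writes the Duhamel map $\Phi$, invokes \eqref{est:LinSchrInF}, \eqref{est:InhomSchrInF}, the energy estimate for the half-wave propagator, and Proposition~\ref{prop:NonlinearEstimates}, and then appeals to a standard fixed point argument in $C([-T,T],H^{s,s'}\times H^{s-\frac12,s'})\cap(F^{s,s'}(T)\times W^{s-\frac12,s'}(T))$. You have simply spelled out the details (choice of $R$, smallness of $T$, contraction on differences, recovery of continuity) that the paper leaves implicit.
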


\begin{proof}
	We define a mapping $\Phi$ by the right-hand side of the integral equation corresponding to~\eqref{eq:dZakharovfo} (after dropping the real part), i.e.
	\begin{align*}
		\Phi(E,N) = \begin{pmatrix}
						e^{i t \Delta' - t \partial_{x_d}} E_0 - i \int_0^t e^{(t-s)(i \Delta' - \partial_{x_d})} (N E)(s)  d s \vspace{0.2em}\\
						e^{i t |\nabla'|} N_0 + i \int_0^t e^{i(t-s)|\nabla'|} \, |\nabla'| |E(s)|^2 d s
					\end{pmatrix}.
	\end{align*}
	The estimates~\eqref{est:LinSchrInF} and \eqref{est:InhomSchrInF}, the energy estimate for the half-wave equation, and the nonlinear estimates from Proposition~\ref{prop:NonlinearEstimates} now allow us to perform a standard fixed point argument in the Banach space
	\begin{align*}
		C([-T,T],H^{s,s'}(\R^d) \times H^{s-\frac{1}{2},s'}(\R^d)) \cap (F^{s,s'}(T) \times W^{s-\frac{1}{2},s'}(T)),
	\end{align*}
	which yields the assertion of the theorem.
\end{proof}


\section{Nonlinear Estimates}
\label{sec:NonlinearEstimates}

We now provide the proof of the nonlinear estimates in Proposition~\ref{prop:NonlinearEstimates}.

\vspace{1em}

\emph{Proof of Proposition~\ref{prop:NonlinearEstimates}.}
Here we only consider $d \geq 4$. The case $d=3$ can be handled in a similar way.
We consider \eqref{est:prop2.3-a} first. By Minkowski's inequality, we get that
\begin{align*}
&\| u v \|_{G^{s,s'}(T)}
\lesssim \bigl( \sum_{N,M} M^{2 s'}N^{2s} \bigl\| \eta_{M}(\xi_d)  \| P_N \bigl( \F_{x_d}u *_{\xi_d} \F_{x_d} v \bigr)\|_{G_N(T)}\bigr\|_{L_{\xi_d}^2}^2\bigr)^{\frac12}\\
& \lesssim
\bigl( \sum_{N,M} M^{2 s'}N^{2s} \bigl\| \eta_{M}(\xi_d) \int_{\R}  \| P_N \bigl( (\F_{x_d}u)(\xi_d-\eta_d) (\F_{x_d} v)(\eta_d) \bigr)\|_{G_N(T)} d\eta_d \bigr\|_{L_{\xi_d}^2}^2\bigr)^{\frac12},
\end{align*}
where $*_{\xi_d}$ denotes the convolution in the variable $\xi_d$.
Hence, it suffices to show the estimate
\begin{equation}\label{est:prop2.3-a1}
\| P_N (f_1 f_2)\|_{G_N(T)} \lesssim T^{\frac12}N_2^{-\frac12} N_{\min}^{\frac{d-2}{2}} \| f_1\|_{F_{N_1}(T)} \|f_2\|_{L_t^{\infty} L_x^2}
\end{equation}
for all $f_{1} \in F_{N_1}(T)$ and $f_{2} \in L_{N_2}^2(T)\cap L^\infty_t L^2_x$, where $N_{\min}=\min(N_1,N_2)$.

We consider the three cases $N \ll N_1 \sim N_2$, $N_2 \ll N \sim N_1$, and $N_1 \lesssim N \sim N_2$. In the first case, using Bernstein's and H\"{o}lder's inequality, we find that
\begin{align}
\label{eq:Estf1f2InGNI}
\| P_N (f_1 f_2)\|_{G_N(T)} & \leq \| P_N(f_1f_2)\|_{L_{t,x}^{p_{d-1}'}} \lesssim T^{\frac12}N^{\frac{d-3}{2}} \| P_N(f_1f_2)\|_{L_t^{\alpha}L_x^{\beta}} \\
& \lesssim T^{\frac12}N^{\frac{d-3}{2}} \|f_1\|_{L_t^{\alpha}L_x^{\gamma}} \|f_2\|_{L_t^{\infty}L_x^2}
\lesssim T^{\frac12}N^{\frac{d-3}{2}} \| f_1\|_{F_{N_1}(T)} \|f_2\|_{L_t^{\infty} L_x^2}, \nonumber
\end{align}
where
\[
\Bigl( \frac{1}{\alpha}, \, \frac{1}{\beta},\, \frac{1}{\gamma}\Bigr) = \Bigl(\frac{1}{d+1}, \, 1-\frac{2}{(d-1)(d+1)}, \, \frac12-\frac{2}{(d-1)(d+1)}\Bigr).
\]
In the last estimate we also used that $\|f_1\|_{L_t^{\alpha}L_x^{\gamma}} \lesssim \|f_1\|_{F_{N_1}(T)}$ which follows by interpolating between $L_t^{\infty} L_x^2$ and $L_{t,x}^{p_{d-1}}$.
The second case $N_2 \ll N \sim N_1$ can be treated in a similar way, employing H{\"o}lder's inequality first and then Bernstein's inequality on $f_2$.

It remains the case $N_1 \lesssim N \sim N_2$. If $N = 1$, we again argue as in~\eqref{eq:Estf1f2InGNI}. If $N > 1$, H\"{o}lder's inequality yields
\begin{align*}
\| P_N (f_1 f_2)\|_{G_N(T)} & \leq N^{-\frac12} \sum_{j=1}^{d-1} \|P_N(f_1f_2)\|_{L_{\vece_j}^{1,2}} \lesssim N^{-\frac12} \sum_{j=1}^{d-1} \|f_1\|_{L_{\vece_j}^{2,\infty}} \|f_2\|_{L_{t,x}^2} \\
&\lesssim T^{\frac12}N^{-\frac12}  N_1^{\frac{d-2}{2}} \| f_1\|_{F_{N_1}(T)} \|f_2\|_{L_t^{\infty} L_x^2},
\end{align*}
which completes the proof of \eqref{est:prop2.3-a1}.

Next we prove \eqref{est:prop2.3-b}. We compute that
\begin{align*}
&\| |\nabla'|(u_1 u_2)\|_{Y^{s-\frac12, s'}} \\
&\lesssim \! \bigl( \sum_{N,M} M^{2 s'}N^{2s+1} \bigl\| \eta_{M}(\xi_d) \! \int_{\R}  \| P_N \bigl( (\F_{x_d}u_1)(\xi_d-\eta_d) (\F_{x_d} u_2)(\eta_d) \bigr)\|_{L_{t}^1L_x^2} d\eta_d \bigr\|_{L_{\xi_d}^2}^2\bigr)^{\frac12}.
\end{align*}
Therefore, it is enough to show
\begin{equation}\label{est:prop2.3-b1}
\| P_N (g_1 g_2)\|_{L_{t,x}^2} \lesssim N_{\max}^{-\frac12} N_{\min}^{\frac{d-2}{2}} \| g_1\|_{F_{N_1}(T)} \|g_2\|_{F_{N_2}(T)}
\end{equation}
for all $g_1 \in F_{N_1}(T)$ and $g_2 \in F_{N_2}(T)$,
where $N_{\max}=\max(N_1,N_2)$ and $N_{\min}=\min(N_1,N_2)$. Without loss of generality, we may assume $N_2 \leq N_1$. In the case $N_1 = 1$, we easily obtain~\eqref{est:prop2.3-b1} from H{\"o}lder's inequality, Bernstein's inequality and interpolation.

 We can thus assume $N_1 > 1$ in the following. Recall from \eqref{eq:dec-pe} that we have
	\begin{align*}
		g_1 = \sum_{j=1}^{d-1} P_{N_1,\vece_j} \Big[\prod_{l=1}^{j-1} (1-P_{N_1,\vece_l})\Big] g_1.
	\end{align*}
Since $(P_{N_1/2} + P_{N_1} + P_{2N_1})P_{N_1,\vece}$ is bounded on $L^{\infty,2}_{\vece'}$ for all $\vece, \vece' \in \Sp^{d-2}$, H\"{o}lder's inequality allows us to estimate
\begin{align*}
\| P_N ( g_1 g_2)\|_{L_{t,x}^2} & \leq \sum_{j = 1}^{d-1} \Big\|\Big(P_{N_1,\vece_j} \Big[\prod_{l=1}^{j-1} (1-P_{N_1,\vece_l})\Big] g_1 \Big) g_2\Big\|_{L^2_{t,x}} \\
&\lesssim \sum_{j = 1}^{d-1} \|P_{N_1,\vece_j} g_1\|_{L_{\vece_j}^{\infty,2}} \|g_2\|_{L_{\vece_j}^{2,\infty}} \lesssim N_1^{-\frac12} N_2^{\frac{d-2}{2}} \| g_1\|_{F_{N_1}(T)} \|g_2\|_{F_{N_2}(T)},
\end{align*}
which completes the proof of \eqref{est:prop2.3-b1}. \hfill $\qed$

\vspace{1em}

The above proof shows that, if $d\geq 4$, in the case $s=\frac{d-2}{2}$ and $s'=\frac12$ one obtains similar estimates in the $\ell^1$-based Besov norms.

\appendix

\section{Examples involving Fourier restriction norms}\label{sec:ex}
In this section, we prove that it is impossible to solve the problem by using Fourier restriction norms only. To that end, we define the additional frequency and modulation projections $P_{N,M}$, $\mathcal{E}_{L}$, and
$\mathcal{W}_{L}^{\pm}$ as
\begin{align*}
&(\F_{x,x_d} P_{N,M}f )(\xi,\xi_d ):=  \eta_{N}(\xi ) \eta_{M}(\xi_d)(\F_{x,x_d}{f})(\xi,\xi_d ),\\
 &\widehat{\mathcal{E}_{L} u}(\tau ,\xi,\xi_d ):=  \eta_{L}(\tau + |\xi|^2 + \xi_d)\widehat{u}(\tau ,\xi,\xi_d ), \;
\widehat{\mathcal{W}_{L}^{\pm} v}(\tau ,\xi,\xi_d ):=  \eta_{L}(\tau \pm |\xi|)\widehat{v}(\tau ,\xi,\xi_d ).
\end{align*}
For the parameters $s,s',b \in \R$ and $1\leq p\leq \infty$ we define the Fourier restriction spaces $X^{s,s',b,p}_{\mathcal{E}} (\R^{d+1})$ and $X^{s,s',b,p}_{\mathcal{W}_{\pm}} (\R^{d+1})$ as the collection of tempered distributions such that the following norms are finite:
\begin{align*}
  & \|u  \|_{X^{s,s',b,p}_{\mathcal{E}}} := \Bigl\|\Bigl( N^{s} {M}^{ s'}  \bigl\|\bigl( L^{b } \|  P_{N,M} \mathcal{E}_L u \|_{L_{t ,x ,x_d}^{2}}\bigr)_{L \in 2^\N}\bigr\|_{\ell^p_L}\Bigr)_{N,M \in 2^\N}\Bigr\|_{\ell^2_{N,M}},\\
  & \|u  \|_{X^{s,s',b,p}_{\mathcal{W}_{\pm}}} := \Bigl\|\Bigl( N^{s} {M}^{ s'}  \bigl\|\bigl( L^{b } \|  P_{N,M} \mathcal{W}^{\pm}_L u \|_{L_{t ,x ,x_d}^{2}}\bigr)_{L \in 2^\N}\bigr\|_{\ell^p_L}\Bigr)_{N,M \in 2^\N}\Bigr\|_{\ell^2_{N,M}}.
\end{align*}
The precise statement we prove in this section is the following:
\begin{prop}\label{thm:ex}
\begin{enumerate}
\item[]
\item \label{it:b1} Suppose that there exists $C>0$ such that
\[
\| u \, v \|_{X^{s,s',b_1-1,p_1}_{\mathcal{E}}} \leq C \| u\|_{X^{s,s',b_1,p_1}_{\mathcal{E}}} \|v  \|_{X^{s-\frac12, s',b_2,p_2}_{\mathcal{W}_{\pm}}}
\]
holds for all square-integrable $u,v$ with compact Fourier support.
Then, either $b_1<1/2$ or $(b_1,p_1)=(1/2,\infty)$ holds.
\item \label{it:b2} Suppose that there exists $C>0$ such that
\[
\| \sqrt{-\Delta'}(|w|^2) \|_{X^{s-\frac12, s',b_2-1,p_2}_{\mathcal{W}_{\pm}}} \leq C \| w\|_{X^{s,s',b_1,p_1}_{\mathcal{E}}}^2
\]
holds for all square-integrable $w$ with compact Fourier support.
Then, either $b_1>1/2$ or $(b_1,p_1)=(1/2,1)$ holds.
\end{enumerate}
\end{prop}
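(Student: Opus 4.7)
My plan is to establish each part of Proposition~\ref{thm:ex} by exhibiting explicit counterexample sequences of compactly Fourier-supported $L^2$-functions that defeat the proposed bilinear estimate whenever the parameter restriction fails. Both constructions are driven by a degeneracy feature of the Schr\"odinger phase $\tau+|\xi|^2+\xi_d$: since the phase is linear in the non-dispersive direction $\xi_d$, a wave-type Fourier packet at spatial frequency $|\xi|\sim N$ placed at $\xi_d\sim -N^2$ is only at Schr\"odinger-modulation distance $\sim N$ from the characteristic instead of $\sim N^2$. Multiplying such a wave packet against a low-frequency Schr\"odinger packet at the origin transfers the $(N^2)^{s'}$-weight from the wave side onto the Schr\"odinger side of the estimate without any matching cost.

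For part~\eqref{it:b1} in the case $b_1>1/2$, I use the following single-packet test. Let $u$ be $L^2$-normalized with Fourier support in the unit ball about $(0,0,0)$, and let $v$ be $L^2$-normalized with Fourier support in the unit ball about $(\mp N,\,N\vece_1,\,-N^2)$, so that $v$ sits on the wave characteristic $\tau\pm|\xi|=0$ inside the frequency block $(N,N^2)$. Then
\[
\|u\|_{X^{s,s',b_1,p_1}_{\mathcal{E}}}\sim 1,\qquad \|v\|_{X^{s-\frac12,s',b_2,p_2}_{\mathcal{W}_\pm}}\sim N^{s-\frac12+2s'},
\]
while the product $uv$ also lives in the $(N,N^2)$-block, its Schr\"odinger modulation at the Fourier centre equals $\mp N+N^2-N^2=\mp N$, and $\|uv\|_{L^2}\sim 1$. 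Hence
\[
\|uv\|_{X^{s,s',b_1-1,p_1}_{\mathcal{E}}}\sim N^{s}\,(N^2)^{s'}\,N^{b_1-1}=N^{s+2s'+b_1-1}.
\]
The hypothesized estimate therefore forces $N^{b_1-1/2}\lesssim 1$ uniformly in $N\ge 1$, giving $b_1\le 1/2$. To rule out the remaining endpoint $b_1=\tfrac12$ with $p_1<\infty$, I would replace $v$ by a random superposition $\sum_{k=1}^{K}\varepsilon_k v_k$ of $K\sim\log N$ packets placed at $(\mp N,\,N\vece_1,\,-N^2+\delta_k)$ with $\delta_k$ geometrically spaced between $2N$ and $N^{2-\kappa}$, so that the $uv_k$ carry distinct Schr\"odinger modulation scales within the same $(N,N^2)$-block. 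A Khintchine-type sign argument is expected to produce a $(\log N)^{1/p_1}$ factor in the LHS $\ell^{p_1}_L$ norm, while the wave-side norm enjoys only $\sqrt{\log N}$ from $L^2$-orthogonality at $L=1$; the resulting ratio $(\log N)^{1/p_1-1/2}$ diverges for $p_1<\infty$.

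Part~\eqref{it:b2} is handled dually. Writing $|w|^2=w\bar w$ and dualizing against a wave test function, the estimate converts to a trilinear form in $(w,\bar w,g)$, and the same Schr\"odinger--wave resonance construction now runs with the roles of the two modulation weights exchanged. The single-packet version then forces $b_1\ge 1/2$; the multi-scale random refinement, via the duality $(\ell^{p_1})^{*}=\ell^{p_1'}$, selects the endpoint $(b_1,p_1)=(1/2,1)$ rather than $(1/2,\infty)$.

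The principal technical obstacle is the endpoint $\ell^{p_1}_L$ analysis: the single-packet scaling bounds are immediate and clean, but the logarithmic gap required at $b_1=1/2$ needs a multi-scale random construction producing a $(\log N)^{1/p_1}$ gain on the LHS while preventing the wave-side orthogonality from matching it through a compensating $\ell^{p_1'}$-type cancellation, and keeping all packets inside a single $(N,M)$-block throughout will require careful bookkeeping of the degenerate $\xi_d$-direction.
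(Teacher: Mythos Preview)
Your single-packet scaling argument for $b_1>\tfrac12$ in part~\eqref{it:b1} is essentially correct, but the endpoint argument at $b_1=\tfrac12$ contains a genuine error and, more importantly, misses the mechanism that makes the paper's proof work.

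First, the arithmetic slip: you assert that $(\log N)^{1/p_1-1/2}$ diverges for all $p_1<\infty$, but of course it diverges only for $p_1<2$. So even if your Khintchine construction delivered the claimed exponents, it would not rule out $2\le p_1<\infty$.

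Second, and more seriously, the multi-packet construction does not produce the claimed $(\log N)^{1/p_1}$ on the left. If the $uv_k$ sit at genuinely distinct dyadic Schr\"odinger modulations $L_k\sim\delta_k$ with $\delta_k$ geometrically spaced above $N$, then the left-hand $\ell^{p_1}_L$ sum is $\bigl(\sum_k L_k^{-p_1/2}\|uv_k\|_{L^2}^{p_1}\bigr)^{1/p_1}$ with $\|uv_k\|_{L^2}\sim 1$; this is a convergent geometric series dominated by the smallest $L_k$, yielding no logarithm at all. Randomizing signs does not help, because the packets are already $L$-orthogonal.

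The paper's proof avoids all of this by choosing the center differently: it places the \emph{single} wave packet at $a_N^\pm=(\mp N,\,N\vece_1,\,-N^2\pm N)$, so that the Schr\"odinger modulation $\tau+|\xi|^2+\xi_d$ vanishes at the center (your choice $\xi_d=-N^2$ gives modulation $\mp N$ instead). Since $\nabla_\xi|\xi|^2\sim N$ there, a unit ball around $a_N^\pm$ sweeps through every dyadic modulation shell $S_L$ with $1\le L\lesssim N$, and $|B_{1/2}(a_N^\pm)\cap S_L|\sim L/N$. Hence $L^{-1/2}\|\widehat{u}*\widehat{v}\|_{L^2(S_L)}\sim N^{-1/2}$ is \emph{independent of $L$}, and the $\ell^{p_1}_L$ norm picks up exactly $(\log N)^{1/p_1}$ --- with a single packet, so the right-hand side is $\sim 1$ with no $\sqrt{\log N}$ cost. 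No randomness, no superposition; the spreading over modulation scales is automatic once the center sits on the Schr\"odinger characteristic.

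For part~\eqref{it:b2} the paper again proceeds directly rather than by duality: it takes $w$ to be the sum of a unit packet at the origin and a high-frequency piece of the form $(\log N)^{-1/p_1}\sum_{1\le L\lesssim N} L^{-1}\chi_{B_1(a_N^\pm)\cap S_L}$, normalized so that $\|w\|_{X^{s,s',1/2,p_1}_{\mathcal{E}}}\sim 1$. The cross term in $|w|^2$ then carries a factor $\sum_L L^{-1}\cdot(L/N)\sim N^{-1}\log N$, producing $(\log N)^{1-1/p_1}\to\infty$ for $p_1>1$. Your duality sketch is too vague to assess, and would in any case inherit the flawed endpoint mechanism from part~\eqref{it:b1}.
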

\begin{proof}
For $r>0$ and $a \in \R^{d+1}$, we define the ball $B_r(a) = \{x \in \R^{d+1} \, | \, |x-a| \leq r \}$.
Let $N \gg 1$, $L\geq 1$, $a_N^{\pm}=(\mp N, N, 0, \ldots, 0, -N^2 \pm N)$.
We use $S_L=\{(\tau,\xi,\xi_d) \in \R^{d+1} \, | \, L \leq |\tau+|\xi|^2 +\xi_d| \leq 2L\}$.

Firstly, we show \eqref{it:b1}. We define the functions $u$, $v_{N,\pm} \in L^2(\R^{d+1})$ as
\[
\widehat{u} = \chi_{B_1(0)}, \quad \widehat{v}_{N,\pm}= N^{-s-2 s' + 1/2} \chi_{B_1(a_N^{\pm})},
\]
where $\chi_A$ denotes the characteristic function of the set $A$.
It is easily seen that for all $b_1$, $b_2 \in \R$, $p_1$, $p_2 \in [1,\infty]$, it holds that
\[
\| u\|_{X^{s,s',b_1,p_1}_{\mathcal{E}}} \sim 1, \quad \|v_{N,\pm}  \|_{X^{s-\frac12, s',b_2,p_2}_{\mathcal{W}_{\pm}}} \sim 1.
\]
Thus, it suffices to show that if $p_1 < \infty$, we have
\begin{equation}\label{est:theorem2.1-1}
\lim_{N \to \infty} \| u \, v_{N,\pm} \|_{X^{s,s',-\frac12,p_1}_{\mathcal{E}}} = \infty.
\end{equation}
We observe that if $(\tau,\xi,\xi_d) \in B_{1/2}(a_N^{\pm})$ then $(\chi_{B_1(0)} * \chi_{B_1(a_N^{\pm})}) (\tau,\xi,\xi_d) \sim 1$. Therefore, for $p_1 < \infty$, we get
\begin{align*}
\| u \, v_{N,\pm} \|_{X^{s,s',- \frac12,p_1}_{\mathcal{E}}} & \geq N^{s+2 s'}  \bigl( \sum_{1 \leq L \leq 2^{-2}N} L^{-\frac{p_1}{2}} \| \widehat{u} * \widehat{v}_{N,\pm} \|_{L_{\tau, \xi, \xi_d}^{2}(B_{1/2}(a_N^{\pm}) \cap S_L)}^{p_1} \bigr)^{\frac{1}{p_1}}\\
& \geq N^{\frac12}\bigl( \sum_{1 \leq L \leq 2^{-2} N} L^{-\frac{p_1}{2}} \| \chi_{B_1(0)} * \chi_{B_1(a_N^{\pm})}  \|_{L_{\tau, \xi, \xi_d}^{2}(B_{1/2}(a_N^{\pm}) \cap S_L)}^{p_1} \bigr)^{\frac{1}{p_1}}\\
& \sim  N^{\frac12}\bigl( \sum_{1 \leq L \leq 2^{-2}N} L^{-\frac{p_1}{2}} \Bigl( \frac{L}{N}\Bigr)^{\frac{p_1}{2}} \bigr)^{\frac{1}{p_1}} \sim (\log N)^{\frac{1}{p_1}},
\end{align*}
which implies \eqref{est:theorem2.1-1}.
Here, the third estimate holds because the measure of the set $B_{1/2}(a_N^{\pm}) \cap S_L$ is comparable to $L/N$ if $1 \leq L \leq 2^{-2}N$.

Secondly, we prove \eqref{it:b2}. Let $1<p_1 < \infty$. We define the functions $w_{N,\pm} \in L^2(\R^{d+1})$ as
\[
\widehat{w}_{N,\pm} = \chi_{B_1(0)} +
(\log N)^{-\frac{1}{p_1}}N^{-s-2 s' + \frac12} \sum_{1 \leq L \leq 2^{-2}N} L^{-1}\chi_{B_{1}(a_N^{\pm}) \cap S_L}.
\]
It is straightforward to check $\| w_{N,\pm}\|_{X^{s,s',\frac12,p_1}_{\mathcal{E}}} \sim 1$.
Our goal is to show that for all $b_2$, $p_2$ it holds that
\begin{equation}\label{est:theorem2.1-2}
\lim_{N \to \infty}\| \sqrt{-\Delta'}(|w_{N,\pm}|^2) \|_{X^{s-\frac12, s',b_2-1,p_2}_{\mathcal{W}_{\pm}}} = \infty.
\end{equation}
To see this, we note that if $(\tau,\xi,\xi_d) \in B_{1/2}(a_N^{\pm})$, then \[(\chi_{B_1(0)} * \chi_{B_1(a_N^{\pm}) \cap S_L}) (\tau,\xi,\xi_d) \sim L/N\] holds.
We compute that
\begin{align*}
&\| \sqrt{-\Delta'}(|w_{N,\pm}|^2) \|_{X^{s-\frac12, s',b_2-1,p_2}_{\mathcal{W}_{\pm}}}\\ &\geq{} 
(\log N)^{-\frac{1}{p_1}}N
\Bigl\| \sum_{1 \leq L \leq 2^{-2}N} L^{-1} \bigl( \chi_{B_1(0)}*  \chi_{B_{1}(a_N^{\pm}) \cap S_L} \bigr) \Bigr\|_{L_{\tau, \xi, \xi_d}^2(B_{1/2}(a_N^{\pm}))}\\
&\sim{}  (\log N)^{-\frac{1}{p_1}}N \Bigl\| \sum_{1 \leq L \leq 2^{-2}N} L^{-1} \Bigl( \frac{L}{N}\Bigr) \Bigr\|_{L_{\tau, \xi, \xi_d}^2(B_{1/2}(a_N^{\pm}))} \sim (\log N)^{1-\frac{1}{p_1}}.
\end{align*}
This completes the proof of \eqref{est:theorem2.1-2}.
\end{proof}

\section*{Acknowledgments}
Financial support by the German Research Foundation (DFG) through the CRC 1283 ``Taming uncertainty and profiting from
  randomness and low regularity in analysis, stochastics and their
  applications'' is acknowledged. The second author is supported by JSPS KAKENHI Grant Number 820200500051.
\bibliographystyle{abbrv}
\bibliography{DegZakhBiblio}

\end{document}